\newtheorem{theorem}{Theorem}
\newtheorem{definition}[theorem]{Definition}
\newtheorem{example}[theorem]{Example}
\newtheorem{lemma}[theorem]{Lemma}
\newenvironment{proof}[1][Proof]{\noindent\textbf{#1.} }{\ \rule{0.5em}{0.5em}}
\begin{document}

\title{\textbf{Ballot Paths Avoiding Depth Zero Patterns}}
\author{{\Large Heinrich Niederhausen and Shaun Sullivan} \\
%EndAName
Florida Atlantic University, Boca Raton, Florida \\
\texttt{niederha@fau.edu, ssulli21@fau.edu}}
\date{}
\maketitle

\section{Introduction}

In a paper by Sapounakis, Tasoulas, and Tsikouras \cite{stt}, the authors
count the number of occurrences of patterns of length four in Dyck paths. In
this paper we specify in one direction and generalize in another. We only
count ballot paths that avoid a given pattern, where a ballot path stays
weakly above the diagonal $y=x$, starts at the origin, and takes steps from
the set $\{\uparrow ,\rightarrow \}=\{u,r\}$. A pattern is a finite string
made from the same step set; it is also a path. Notice that a ballot path
ending at a point along the diagonal is a Dyck path. Consider the following
enumeration of ballot paths avoiding the pattern $rur$.

\begin{equation*}
\begin{tabular}{l}
$%
\begin{tabular}{l||llllllllll}
$m$ & 1 & 8 & 28 & 62 & 105 & 148 & 178 & 178 & 127 & 0 \\ 
$7$ & 1 & 7 & 21 & 40 & 59 & 72 & 72 & 51 & 0 &  \\ 
$6$ & 1 & 6 & 15 & 24 & 30 & 30 & 21 & 0 &  &  \\ 
$5$ & 1 & 5 & 10 & 13 & 13 & 9 & 0 &  &  &  \\ 
$4$ & 1 & 4 & 6 & 6 & 4 & 0 &  &  &  &  \\ 
$3$ & 1 & 3 & 3 & 2 & 0 &  &  &  &  &  \\ 
$2$ & 1 & 2 & 1 & 0 &  &  &  &  &  &  \\ 
$1$ & 1 & 1 & 0 &  &  &  &  &  &  &  \\ 
$0$ & 1 & 0 &  &  &  &  &  &  &  &  \\ \hline\hline
& $0$ & $1$ & $2$ & $3$ & $4$ & $5$ & $6$ & $7$ & $8$ & $n$%
\end{tabular}%
$ \\ 
\multicolumn{1}{c}{The number of ballot paths to $\left( n,m\right) $
avoiding $rur$}%
\end{tabular}%
\end{equation*}

$s_n(m)$, the entry at the point $(n,m)$, is the number of ballot paths from
the origin to that point avoiding the pattern $rur$. We can generate this
table by using the recurrence formula 
\begin{equation}
s_n(m)=s_n(m-1)+s_{n-1}(m)-s_{n-1}(m-1)+s_{n-1}(m-2)
\label{rur}
\end{equation}
where $s_0(m)=1$, and $s_n(n-1)=0$ represents the weak boundary $y=x$. The
above table is calculated using this recursion and suggests the columns are
the values of a polynomial sequence. It will be shown that $s_n(m)$ is a
polynomial in $m$. We shall present a theorem guaranteeing the existence of
a polynomial sequence for a given recurrence relation and boundary
conditions.

If the number of ballot paths reaching $(n,x)$ is the values of a polynomial
sequence $s_{n}(x)$, the recurrence relation obtained can be transformed
into an operator equation. Using the tools from Finite Operator Calculus in
Section 2, we can find an explicit formula for the polynomial sequence $%
s_{n}(x)$.

In this paper, we will only consider patterns $p$ such that its reverse
pattern $\tilde{p}$ is a ballot path. For example, the reverse pattern of $%
p=uururrr$ is $\tilde{p}=uuururr$. We call such patterns \textit{depth-zero}
patterns. This is a requirement in Theorem \ref{ExistThm}, although for some
patterns that are not depth-zero, the solution may be a polynomial sequence,
but not for the entire enumeration. For patterns of nonzero depth see \cite%
{NieSul3}. To develop the recursions, we need to investigate the properties
of the pattern we wish to avoid.

\begin{definition}
A nonempty string $o\in\{u,r\}^*$ is a bifix of a pattern $p$ if it can be
written in the form $p=op^{\prime }$ and $p=p^{\prime \prime }o$ for
nonempty strings $p^{\prime },p^{\prime \prime }\in \{u,r\}^*$. If a pattern
has no bifixes, then we call it bifix-free.
\end{definition}

\begin{definition}
If $a$ is the number of $r$'s in $p$ and $c$ is the number of $u$'s, then we
say $p$ has dimensions $a\times c$, and $d(p)=a-c$.
\end{definition}

If $o$ is a bifix for the pattern $p$, then we will see in Section 4 that we
need that $d(p^{\prime \prime })\geq0$. The following lemma shows that this
restriction is not necessary when $p$ is depth zero.

\begin{lemma}
\label{Lem1}If $p$ is a depth zero pattern and $o$ is a bifix of $p$, then $%
d(p^{\prime \prime })\geq 0$.
\end{lemma}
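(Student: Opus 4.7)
The plan is to unfold the depth-zero hypothesis into a concrete statement about suffixes of $p$ and then apply it directly to $p'$.

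First I would pin down what the reversal actually does to the letter statistics. Reading the paper's own example $p=uururrr$, $\tilde p=uuururr$ shows that $p\mapsto\tilde p$ reverses the string \emph{and} swaps $u\leftrightarrow r$. Consequently, a prefix of $\tilde p$ of length $k$ has the same number of $u$'s (resp.\ $r$'s) as the length-$k$ suffix $s$ of $p$ has $r$'s (resp.\ $u$'s). The condition that $\tilde p$ be a ballot path (every prefix has at least as many $u$'s as $r$'s) therefore translates into: every suffix $s$ of $p$ satisfies $d(s)=|s|_r-|s|_u\geq 0$. This is the working form of ``depth zero'' I will use.

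Next, the bifix hypothesis $p=op'$ shows that $p'$ is exactly the suffix of $p$ of length $|p|-|o|$. Applying the reformulated depth-zero condition to this particular suffix yields $d(p')\geq 0$.

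Finally, I would match the statistics of $p'$ to those of $p''$. Because $p=op'=p''o$, counting $u$'s gives $|o|_u+|p'|_u=|p''|_u+|o|_u$, hence $|p'|_u=|p''|_u$; symmetrically $|p'|_r=|p''|_r$. Therefore $d(p'')=d(p')\geq 0$, which is the claim. The only subtlety in the whole argument is pinning down exactly what ``reverse'' means in the definition of depth zero; once that is translated into the suffix condition, everything else is a single line of bookkeeping.
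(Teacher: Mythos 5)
Your proof is correct and follows essentially the same route as the paper: both arguments reduce the claim to the observation that $p'$ is a suffix of a depth-zero pattern (hence $d(p')\geq 0$) and that $p=op'=p''o$ forces $d(p')=d(p'')$. You simply unfold the definition of the reverse pattern more explicitly than the paper does, which states the suffix property and the additivity $d(ab)=d(a)+d(b)$ without elaboration.
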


\begin{proof}
First we note the additive property of $d$, that is $d(ab)=d(a)+d(b)$. By
definition $p=op^{\prime }=p^{\prime \prime }o$, which implies $d(p^{\prime
})=d(p^{\prime \prime })$. Since $p$ is depth zero, any suffix of $p$ is
also depth zero, in particular $p^{\prime }$ is depth zero. Therefore, $%
d(p^{\prime })\geq0$, which implies $d(p^{\prime \prime })\geq0$.
\end{proof}

\section{Main Tools}

In this section we will present the main tools from Finite Operator Calculus
that will be used to solve these enumeration problems. As we noted before,
not every pattern we choose to avoid will have a polynomial sequence
solution. The following theorem will illustrate why we want a depth-zero
pattern.

\begin{theorem}
\label{ExistThm}Let $x_{0},x_{1},...$ be a given sequence of integers where $%
F_{n}(m)=0$ for $m<x_{n}$. For $m>x_{n}$ define $F_{n}\left(
m\right) $ recursively for all $n\geq 0$ by%
\begin{equation*}
F_{n}(m)=F_{n}(m-1)+\sum\limits_{i=1}^{n}\sum\limits_{j}a_{i,j}F_{n-i}(m-b_{i,j}),
\end{equation*}%
where $\sum\limits_{j}a_{1,j}\neq 0$ and with initial values $F_{n}(x_{n})$.
If $F_{0}(m)=c\neq 0$ for all $m\geq x_{0}$, and if for each $j
$ holds $b_{i,j}\leq x_{n}-x_{n-i}+1$ for $n\geq 1$ and $i=1,\ldots ,n
$, then there exists a polynomial sequence $f_{n}(x)$ where $\deg {f_{n}}=n$
such that $F_{n}(m)=f_{n}(m)$ for all $m\geq x_{n}$ and $n\geq 0$ (within
the recursive domain).
\end{theorem}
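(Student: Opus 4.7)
The plan is to induct on $n$. The base case $n=0$ is immediate since $F_0(m)=c$ is already a constant polynomial of degree $0$ on its domain $m\geq x_0$. For the inductive step, assume that for each $k<n$ there is a polynomial $f_k(x)$ of degree $k$ with $F_k(m)=f_k(m)$ for every $m\geq x_k$ in the recursive domain, and introduce
\begin{equation*}
g_n(m):=\sum_{i=1}^{n}\sum_{j}a_{i,j}\,f_{n-i}(m-b_{i,j}),
\end{equation*}
which is manifestly a polynomial in $m$.

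First I would verify that $\deg g_n=n-1$. The $i=1$ summands contribute $\sum_{j}a_{1,j}f_{n-1}(m-b_{1,j})$, and since a shift of argument does not alter the leading coefficient of a polynomial, this sum has leading term $\bigl(\sum_{j}a_{1,j}\bigr)L_{n-1}\,m^{n-1}$, where $L_{n-1}\neq 0$ is the leading coefficient of $f_{n-1}$ supplied by the inductive hypothesis. Coupled with the assumption $\sum_{j}a_{1,j}\neq 0$, this coefficient is nonzero, while the summands with $i\geq 2$ have degree at most $n-2$, so $\deg g_n=n-1$.

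Next I would appeal to the standard fact from finite calculus that the backward difference $\nabla$, defined by $\nabla p(m)=p(m)-p(m-1)$, maps polynomials of degree $n$ onto polynomials of degree $n-1$ with kernel the constants. Hence there is a unique polynomial $f_n(x)$ of degree $n$ satisfying $\nabla f_n=g_n$ together with the normalization $f_n(x_n)=F_n(x_n)$. I would then verify $F_n(m)=f_n(m)$ for $m\geq x_n$ by a secondary induction on $m$. The base case $m=x_n$ holds by construction. For $m>x_n$, the hypothesis $b_{i,j}\leq x_n-x_{n-i}+1$ yields
\begin{equation*}
m-b_{i,j}\geq (x_n+1)-(x_n-x_{n-i}+1)=x_{n-i},
\end{equation*}
so the outer induction gives $F_{n-i}(m-b_{i,j})=f_{n-i}(m-b_{i,j})$; combined with $F_n(m-1)=f_n(m-1)$ from the inner induction, the defining recurrence collapses to $F_n(m)=f_n(m-1)+g_n(m)=f_n(m)$.

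The main obstacle, and the whole point of the bound $b_{i,j}\leq x_n-x_{n-i}+1$, is this last piece of bookkeeping: one must check that every evaluation appearing on the right-hand side of the recurrence already lies in the polynomial regime of the appropriate $F_{n-i}$, for otherwise there is no reason for $g_n$ to agree with the true recursive increment. Once this is arranged, the remainder of the argument is purely the standard anti-difference construction, and the degree-count in step two is what keeps the polynomial sequence $\{f_n\}$ from degenerating as $n$ grows.
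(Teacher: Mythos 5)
Your proof is correct and follows essentially the same route as the paper: induction on $n$, with the observation that $\nabla F_n$ is a polynomial of degree $n-1$ (nonvanishing leading coefficient coming from $\sum_j a_{1,j}\neq 0$) and the standard anti-difference fact that then forces $F_n$ to agree with a degree-$n$ polynomial. The only difference is one of detail: you make explicit the inner induction on $m$ and the role of the bound $b_{i,j}\leq x_n-x_{n-i}+1$ in keeping every evaluation inside the polynomial regime, bookkeeping the paper's three-sentence proof leaves implicit (it is only motivated by the table preceding the proof).
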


Before we prove this theorem, we look back at the example of ballot paths
avoiding the pattern $rur$.

\begin{equation*}
\begin{tabular}{l}
$%
\begin{tabular}{l||llllllllll}
$m$ & 1 & 8 & 28 & 91 & 105 & 148 & 178 & 178 & 127 & \textbf{0} \\ 
$7$ & 1 & 7 & 21 & 62 & 59 & 72 & 72 & 51 & \textbf{0} &  \\ 
$6$ & 1 & 6 & 15 & 24 & 30 & 30 & 21 & \textbf{0} &  &  \\ 
$5$ & 1 & 5 & 10 & 13 & 13 & 9 & \textbf{0} &  &  &  \\ 
$4$ & 1 & 4 & 6 & 6 & 4 & \textbf{0} &  &  &  &  \\ 
$3$ & 1 & 3 & 3 & 2 & \textbf{0} &  &  &  &  &  \\ 
$2$ & 1 & 2 & 1 & \textbf{0} &  &  &  &  &  &  \\ 
$1$ & 1 & 1 & \textbf{0} &  &  &  &  &  &  &  \\ 
$0$ & 1 & \textbf{0} & 0 &  &  &  &  &  &  &  \\ 
$-1$ & \textbf{1} & -1 & -1 &  &  &  &  &  &  &  \\ \hline\hline
& $0$ & $1$ & $2$ & $3$ & $4$ & $5$ & $6$ & $7$ & $8$ & $n$%
\end{tabular}%
$ \\ 
$F_{n}(m)=F_{n}(m-1)+F_{n-1}(m)-F_{n-1}(m-1)+F_{n-1}(m-2)$%
\end{tabular}%
\end{equation*}

Here, $x_n=n-1$ and $F_n(x_n)=\delta_{n,0}$. The recursion satisfies the
conditions $b_{i,j}\leq x_n-x_{n-i}+1=i+1$ and $\sum\limits_ja_{1,j}=1\neq0$%
. In the table, below the initial values are the polynomial extensions,
which are not all 0. Clearly, if the recursion required us to use values
below these initial values, the values generated would not agree with the
values generated by the polynomial extensions, thus we need the condition $%
b_{i,j}\leq x_n-x_{n-i}+1$, in general. We now prove Theorem \ref{ExistThm}.

\begin{proof}
We first let $f_0(x)=F_0(x_0)$, since $F_0(m)=F_0(m-1)$. Now suppose $F_n(m)$
has been extended to a polynomial of degree $n$. The recursion tells us that 
$F_{n+1}(m)-F_{n+1}(m-1)$ can be extended to a polynomial of degree $n$
since $\sum\limits_ja_{1,j}\neq0$. But if the backwards difference of a
function is a polynomial, then the function itself is a polynomial of one
degree higher (see \cite{Jordan}).
\end{proof}

The objects of Finite Operator Calculus are polynomial sequences called
Sheffer sequences. A Sheffer sequence is defined by its generating function,
which can be written in the following way.%
\begin{equation*}
p(x,t)=\sum\limits_{n\geq 0}s_{n}(x)t^{n}=\sigma (t)e^{x\beta (t)}
\end{equation*}%
where $\sigma (t)$ is a power series with a multiplicative inverse and $%
\beta (t)$ is a delta series, that is, a power series with a compositional
inverse. We say $(s_{n})$ is a Sheffer sequence for the basic sequence $%
(b_{n})$ where%
\begin{equation*}
b(x,t)=\sum\limits_{n\geq 0}b_{n}(x)t^{n}=e^{x\beta (t)}.
\end{equation*}%
Notice that $b_{n}(0)=\delta _{n,0}$ and $b_{0}(x)=1$.

In order to find the solution to the recurrence, we first transform the
recursion into an equation with finite operators. For every Sheffer
sequence, the linear operator $B=\beta ^{-1}(D)$, where $D=\frac{d}{dx}$,
maps $s_{n}$ to $s_{n-1}$. The operator $B$ also maps its basic sequence $%
b_{n}$ to $b_{n-1}$ (Rota, Kahaner, and Odlyzko \cite{ROK}). Also if $B$ is
a \textit{delta} operator, that is

\begin{equation*}
B=\sum\limits_{n\geq 1}a_{n}D^{n}
\end{equation*}%
where $a_{1}\neq 0$, then it corresponds to a class of Sheffer sequences
associated to a unique basic sequence. We do not have to worry about
convergence since these operators are only applied to polynomials, thus only
a finite number of the terms will be used for a given polynomial, and thus
the name Finite Operator Calculus.

An example of a delta operator that will be frequently used in this paper is 
$\nabla=1-E^{-1}$, where $E^as_n(x)=s_n(x+a)$ is a shift operator. By
Taylor's Theorem, we can write

\begin{equation*}
f(x+a)=\sum\limits_{n\geq0}f^{(n)}(x)\frac{a^n}{n!} \qquad or \qquad
E^a=\sum\limits_{n\geq0}\frac{D^na^n}{n!}=e^{aD}
\end{equation*}
in operators. Clearly $\nabla$ is a delta operator.

If we consider $s_n(x)$ for $x\geq n-1$ in (\ref{rur}), we obtain the
operator equation

\begin{equation*}
\nabla=B-BE^{-1}+BE^{-2}.
\end{equation*}
We have written one delta operator in terms of an unknown delta operator.
The following theorem \cite[Theorem 1]{Nied03} lets us find the basic
sequence for the unknown operator $B$ in terms of the basic sequence of the
known operator.

\begin{theorem}[Tranfer Formula]
Let $A$ be a delta operator with basic sequence $a_n(x)$. Suppose

\begin{equation*}
A=\tau (B)=\sum\limits_{j\geq 1}T_{j}B^{j},
\end{equation*}%
where the $T_{j}$ are linear operators that commute with shift operators,
and $T_{1}$ is invertible. Then $B$ is a delta operator with basic sequence

\begin{equation*}
b_n(x)=x\sum\limits_{i=1}^n\left[\tau^i\right]_n\frac{1}{x}a_i(x).
\end{equation*}
where $\left[\tau^i\right]_n$ is the coefficient of $t^n$ in $\tau^i(t)$.
\end{theorem}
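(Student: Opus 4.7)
The plan is to verify that the proposed formula defines a sequence $(b_n)$ satisfying the three defining properties of the basic sequence of the delta operator $B$: (i) each $b_n$ is a polynomial of degree $n$; (ii) $b_0(x)=1$ and $b_n(0)=0$ for $n\geq 1$; and (iii) $B\,b_n=b_{n-1}$. Properties (i) and (ii) should be routine consequences of the formula, while property (iii), the heart of the statement, is where the real work lies.

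For (i), since $(a_n)$ is itself a basic sequence, $a_i(0)=0$ for $i\geq 1$, so $a_i(x)/x$ is a polynomial of degree $i-1$. Each coefficient $[\tau^i]_n$ is a linear operator commuting with shifts, hence preserves polynomiality, and multiplication by $x$ outside restores one degree. The top-degree contribution comes from the $i=n$ summand with $[\tau^n]_n=T_1^n$, and invertibility of $T_1$ ensures that this contribution does not collapse, giving $\deg b_n = n$. Property (ii) is immediate: for $n\geq 1$ the outer factor of $x$ forces $b_n(0)=0$, and one sets $b_0(x):=1$ by convention (the sum being empty).

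For (iii), I would pass to the generating-function side. Set $\mathcal{B}(x,t)=\sum_{n\geq 0}b_n(x)\,t^n$; interchanging the order of summation and using the identity $\sum_{n\geq i}[\tau^i]_n t^n=\tau(t)^i$ (valid because $\tau$ has no constant term in $t$), the formula rewrites as $\mathcal{B}(x,t)=1+x\sum_{i\geq 1}\tau(t)^i\,(a_i(x)/x)$, interpreted as a formal series in $t$ with operator-valued coefficients acting on polynomials in $x$. The claim $B\,b_n=b_{n-1}$ is equivalent to $B\,\mathcal{B}(x,t)=t\,\mathcal{B}(x,t)$. To prove the latter I would apply $B$ termwise, exploit the hypothesis $\tau(B)=A$ together with $A\,a_i=a_{i-1}$, and use the commutation of each $T_j$ with $B$ to pull the operator through the formal $t$-series; a telescoping rearrangement analogous to the classical Lagrange inversion argument then produces the desired identity.

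The main technical subtlety I expect is the operator-valued nature of $\tau(t)=\sum T_j t^j$: formal identities that are routine for scalar power series must be carefully transported to this setting. The essential leverage is that the $T_j$'s all commute with shifts, hence with each other and with $B$, so that $\tau(t)$ behaves, when paired with $B$ or with the $a_i$'s, exactly as a scalar series would. Once $B\,\mathcal{B}(x,t)=t\,\mathcal{B}(x,t)$ is established, extracting the coefficient of $t^n$ yields $B\,b_n=b_{n-1}$, and combined with (i) and (ii) this identifies $b_n$ as the unique basic sequence of $B$, so that $B$ is indeed a delta operator with the given basic sequence.
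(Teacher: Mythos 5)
First, a remark on scope: the paper itself offers no proof of this statement --- it is imported verbatim from \cite[Theorem 1]{Nied03} --- so there is no internal argument to measure yours against; your proposal has to stand on its own. Its skeleton is the right shape: verifying that the displayed $b_n$ satisfy the three defining properties of the basic sequence of $B$ is a legitimate route, and your treatment of properties (i) and (ii) is essentially correct (for (i) you rightly isolate the top term $x\,T_1^n\,(a_n(x)/x)$ and use that an invertible shift-invariant operator preserves degree, while the terms with $i<n$ have degree at most $i<n$; for (ii) the prefactor $x$ does all the work).

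The problem is step (iii), which is the entire content of the theorem: what you offer is a promissory note rather than a proof, and the note misidentifies where the difficulty lies. You say the essential leverage is that the $T_j$ commute with shifts, hence with each other and with $B$, so that $\tau(t)$ ``behaves exactly as a scalar series would.'' If that were the whole story, the conjugation by $x$ in the formula would be superfluous: one would get $b_n=\sum_i[\tau^i]_n a_i$, the generating function would literally be $e^{x\alpha(\tau(t))}$, and the scalar argument would finish. But that unconjugated formula is false for operator coefficients (already for $\tau(t)=t-t^aE^{-c}$ it violates $b_n(0)=0$). The conjugation $x\,[\tau^i]_n\,\frac{1}{x}$ is there precisely because the $T_j$ --- in this paper, shifts $E^{-c}$ --- and $B$ itself do \emph{not} commute with multiplication by $x$. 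Consequently, when you ``apply $B$ termwise'' to $x\,\tau(t)^i\,(a_i(x)/x)$ you pick up the Pincherle commutator $[B,x]=B'$, and the hypothesis $Aa_i=a_{i-1}$ cannot be invoked directly because the object inside the operators is $a_i(x)/x$, not $a_i(x)$ (and $A$ does not commute with division by $x$ either). Your proposal gives no indication of how the advertised ``telescoping rearrangement'' absorbs these $B'$ correction terms, yet that is exactly where the work of any transfer-formula proof lives (classically it is handled via the Pincherle derivative or an explicit Lagrange-inversion computation). As written, the argument would only establish the scalar case, where the conjugation is vacuous, and so it does not prove the stated theorem.
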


Notice that this Theorem implies that the unknown operator $B$ is a delta
operator, and so the underlying polynomial sequence is a Sheffer sequence.
The final step is to transform this basic sequence into the correct Sheffer
sequence using the initial values. For ballot paths we have that $%
s_n(n-1)=\delta_{n,0}$, thus the following lemma gives us the solution based
on these initial values.

\begin{lemma}
If $(b_n)$ is a basic sequence for $B$, then

\begin{equation*}
t_n(x)=(x-an-c)\dfrac{b_n(x-c)}{x-c}
\end{equation*}
is the Sheffer sequence for $B$ with initial values $t_n(an+c)=\delta_{n,0}$.
\end{lemma}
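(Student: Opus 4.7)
The plan is to establish three things in turn: that $t_n(x)$ is a well-defined polynomial of degree $n$, that $t_n(an+c)=\delta_{n,0}$, and that $(t_n)$ is a Sheffer sequence for $B$. The first two are essentially by inspection, so the real content lies in the third.

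First I would check that the formula makes sense. Since $(b_n)$ is basic, $b_n(0)=0$ for $n\geq 1$, so $b_n(x-c)$ is divisible by $(x-c)$ and the quotient is a polynomial of degree $n-1$; multiplying by $(x-an-c)$ produces a polynomial of degree $n$. For $n=0$ the apparent singularity at $x=c$ is cancelled by the factor $(x-c)$ in the numerator, giving $t_0(x)=1$. The initial values then follow immediately: $t_0(c)=1$, and for $n\geq 1$ the factor $(x-an-c)$ vanishes at $x=an+c$.

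For the Sheffer property I would rewrite the definition as
\[
t_n(x) \;=\; b_n(x-c)\;-\;\frac{an}{x-c}\,b_n(x-c),
\]
which holds uniformly for all $n\geq 0$ (the correction term is zero when $n=0$). Combining $\sum_{n\geq 0}b_n(y)\,z^n = e^{y\beta(z)}$ with $\sum_{n\geq 0} n\,b_n(y)\,z^n = z\tfrac{d}{dz}e^{y\beta(z)} = y z\beta'(z)\,e^{y\beta(z)}$ and substituting $y=x-c$, the generating function of $(t_n)$ collapses to
\[
\sum_{n\geq 0} t_n(x)\,z^n \;=\; \bigl[1 - a z\beta'(z)\bigr]\,e^{-c\beta(z)}\cdot e^{x\beta(z)}.
\]
Setting $\sigma(z)=[1-a z\beta'(z)]\,e^{-c\beta(z)}$, one reads off $\sigma(0)=1$, so $\sigma$ is invertible as a power series and the right-hand side has the canonical Sheffer form $\sigma(z)\,e^{x\beta(z)}$ associated with the delta operator $B=\beta^{-1}(D)$. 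Hence $(t_n)$ is Sheffer for $B$.

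The main obstacle I anticipate is simply recognizing the decomposition $t_n(x)=b_n(x-c)-\tfrac{an}{x-c}b_n(x-c)$; once it is in hand, the generating-function calculation is routine and the Sheffer form can be read off directly. One small point worth flagging is the $n=0$ case, where $b_0(x-c)/(x-c)$ is not literally a polynomial in its own right but the outer factor $(x-c)$ cancels the pole, a subtlety that deserves an explicit sentence in the write-up.
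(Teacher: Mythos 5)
Your argument is correct and is essentially the paper's own approach: the paper does not prove this lemma in-line (it cites \cite[(2.5)]{Nied03} for the Abelization), but the remark immediately following carries out precisely your generating-function computation in the special case $a=1$, $c=-1$, obtaining $\left(1-t\beta'(t)\right)e^{(x+1)\beta(t)}$. Your proof is the straightforward generalization to arbitrary $a$ and $c$, with the $n=0$ degeneracy and the invertibility of $\sigma$ handled correctly.
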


In our case, we would obtain the solution from the basic sequence $b_n$
given in the transfer formula as

\begin{equation}
s_{n}(x)=(x-n+1)\dfrac{b_{n}(x+1)}{x+1}
\label{abel}
\end{equation}%
called Abelization \cite[(2.5)]{Nied03}. We want to remark that given
the generating function $e^{x\beta \left( t\right) }=\sum_{n\geq
0}b_{n}\left( x\right) t^{n}$, we find 
\begin{equation*}
\sum_{n\geq 0}s_{n}\left( x\right) t^{n}=e^{\left( x+1\right) \beta \left(
t\right) }-\frac{t}{x+1}\frac{d}{dt}e^{(x+1)\beta \left( t\right) }=\left(
1-t\beta ^{\prime }\left( t\right) \right) e^{\left( x+1\right) \beta \left(
t\right) }.
\end{equation*}%
All the examples connected to pattern avoidance are of the form
\begin{equation*}
1-E^{-1}=\nabla =\sum_{k\geq 1}a_{k}E^{c_{k}}B^{k}
\end{equation*}%
where $a_{k},c_{k}\in \mathbb{Z}$. If this equation can be solved for $E$,
\begin{equation*}
E=1+\tau \left( B\right) ,
\end{equation*}%
where $\tau$ is a delta series, then
\begin{equation*}
\sum_{n\geq 0}b_{n}\left( x\right) t^{n}=\left( 1+\tau \left( t\right)
\right) ^{x}
\end{equation*}

We first present the results in \cite{NieSul3} which covered bifix-free
patterns and patterns with exactly one bifix. We then present results for
patterns with any number of bifixes, hence for any depth-zero pattern. We
also give some examples and special cases.

\section{Bifix-free patterns}

Let $s_{n}\left( x;p\right) $ be the number of ballot paths avoiding the
pattern $p$; we occasionally will drop the $p$ in the notation when
convenient. If the pattern is bifix-free and depth-zero, we need only to
subtract paths that would end in the pattern, thus we have the recurrence%
\begin{equation}
s_{n}(m;p)=s_{n-1}(m;p)+s_{n}(m-1;p)-s_{n-a}(m-c;p)  \label{(bifix0)}
\end{equation}%
where $p$ has dimensions $a\times c$. For example $uurrurrur$ has dimensions 
$5\times 4$, and depth zero. If the pattern has depth zero,\ and $a\geq
c\geq 1$, $a\geq 2$ (if $a=1$ then $p=ur$, a pattern we do not want to
avoid), then it is easy to check that the conditions of the Theorem \ref%
{ExistThm} are satisfied; the solution is polynomial. In operator notation,%
\begin{equation*}
\nabla =B(1-B^{a-1}E^{-c}).
\end{equation*}

Since the delta operator $\nabla $ can be written as a delta series in $B$,
the operator $B$ is also a delta operator. The basic sequence can be
expressed via the Transfer Formula%
\begin{equation*}
b_{n}(x)=x\sum\limits_{i=0}^{\frac{n}{a-1}}\frac{(-1)^{i}}{x-ci}\dbinom{%
n-(a-1)i}{i}\dbinom{x+n-(a+c-1)i-1}{n-(a-1)i}.
\end{equation*}

Using (\ref{abel}) we obtain%
\begin{equation*}
s_{n}(x)=(x-n+1)\sum\limits_{i=0}^{\frac{n}{a-1}}\frac{(-1)^{i}}{x-ci+1}%
\dbinom{n-(a-1)i}{i}\dbinom{x+n-(a+c-1)i}{n-(a-1)i}.
\end{equation*}

Therefore the number of ballot paths avoiding $p$ and returning to the
diagonal (Dyck paths) equals%
\begin{equation*}
s_{n}(n)=\sum\limits_{i=0}^{\frac{n}{a-1}}\frac{(-1)^{i}}{n-ci+1}\dbinom{%
n-(a-1)i}{i}\dbinom{2n-(a+c-1)i}{n-(a-1)i}
\end{equation*}

\section{Patterns with exactly one bifix}

If the pattern $p$ has exactly one bifix, then there exists a unique
nonempty pattern $o$ such that $p=op^{\prime }o$. If $p$ has depth $0$ and
dimensions $a\times c,$ and $p^{\prime \prime }$ has dimensions $b\times d$,
we have a recurrence of the form%
\begin{equation}
s_{n}(x)=s_{n-1}(x)+s_{n}(x-1)-\sum\limits_{i\geq
0}(-1)^{i}s_{n-a-bi}(x-c-di).  \label{(bifix1)}
\end{equation}

For example let $p=urruurr$. This pattern is depth-zero, with dimensions $%
4\times 3$ and bifix $o=urr$, so $b=2$ and $d=2$. From the paths reaching $%
(n-a,x-c)$, those ending in $p^{\prime \prime }=urru$ cannot be included in
the recurrence and must be subtracted, and from those again we cannot
include paths ending in $urru$, and so on. The $p^{\prime \prime }$ piece of
the pattern that is responsible for this exclusion-inclusion process may not
go below the diagonal; hence it must have $d(p^{\prime \prime })\geq 0$
which lemma \ref{Lem1} guarantees. If $d(p^{\prime \prime })<0$, then at
some point we would be using numbers below the $y=x$ boundary, which are
only the polynomial extensions and do not count paths.

In operators, we have%
\begin{equation*}
1=B+E^{-1}-\sum\limits_{i\geq 0}(-1)^{i}B^{a+bi}E^{-c-di}=B+E^{-1}-\dfrac{%
B^{a}E^{-c}}{1+B^{b}E^{-d}}
\end{equation*}

or%
\begin{eqnarray}
\nabla &=&B+B^{b+1}E^{-d}+B^{b}E^{-d-1}-B^{a}E^{-c}-B^{b}E^{-d}
\label{(index1)} \\
&=&B-B^{a}E^{-c}+B^{b+1}E^{-d}-B^{b}E^{-d}\nabla .  \notag
\end{eqnarray}

Using the Transfer Formula, we obtain $b_{n}(x)=$%
\begin{eqnarray*}
&&x\sum_{j,k,l\geq 0}\binom{n-(a-1)j-bk-(b-1)l}{j,k,l}\frac{(-1)^{j+l}}{%
n-(a-1)j-bk-(b-1)l} \\
&&\times \binom{n-(a+c-1)j-(b+d)(k+l)-1+x}{n-(a-1)j-b(k+l)-1}.
\end{eqnarray*}%
Because $s_{n}(n-1)=\delta _{0,n}$ we get $s_{n}(x)=$%
\begin{eqnarray*}
&&\sum_{j,k,l\geq 0}\binom{n-(a-1)j-bk-(b-1)l}{j,k,l}\frac{(-1)^{j+l}(x-n+1)%
}{n-(a-1)j-bk-(b-1)l} \\
&&\times \binom{n-(a+c-1)j-(b+d)(k+l)+x}{n-(a-1)j-b(k+l)-1}.
\end{eqnarray*}

So for the pattern $urruurr$, we obtain $s_{n}(x)=$%
\begin{equation*}
\sum_{j,k,l\geq 0}\binom{n-3j-2k-l}{j,k,l}\dfrac{(-1)^{j+l}(x-n+1)}{n-3j-2k-l%
}\binom{n-6j-4(k+l)+x}{n-3j-2(k+l)-1}.
\end{equation*}

\begin{equation*}
\begin{tabular}{l}
$%
\begin{tabular}{l||llllllllll}
$m$ & 1 & 8 & 35 & 110 & 270 & 544 & 920 & 1272 & 1236 & 0 \\ 
$7$ & 1 & 7 & 27 & 75 & 161 & 279 & 389 & 377 & 0 &  \\ 
$6$ & 1 & 6 & 20 & 48 & 87 & 122 & 118 & 0 &  &  \\ 
$5$ & 1 & 5 & 14 & 28 & 40 & 38 & 0 &  &  &  \\ 
$4$ & 1 & 4 & 9 & 14 & 13 & 0 &  &  &  &  \\ 
$3$ & 1 & 3 & 5 & 5 & 0 &  &  &  &  &  \\ 
$2$ & 1 & 2 & 2 & 0 &  &  &  &  &  &  \\ 
$1$ & 1 & 1 & 0 &  &  &  &  &  &  &  \\ 
$0$ & 1 & 0 &  &  &  &  &  &  &  &  \\ \hline\hline
& $0$ & $1$ & $2$ & $3$ & $4$ & $5$ & $6$ & $7$ & $8$ & $n$%
\end{tabular}%
$ \\ 
\multicolumn{1}{c}{The number of ballot paths avoiding $urruurr$}%
\end{tabular}%
\end{equation*}

\section{A Special Case}

The above operator equation (\ref{(index1)}) simplifies when $a=b+1$ and $%
c=d $. This corresponds to a pattern of the form $rp^{\prime }r$.\emph{\ }%
For this case we get 
\begin{equation*}
\nabla =B(1+B^{b}E^{-d})^{-1},
\end{equation*}

\begin{equation*}
b_{n}(x)=x\sum\limits_{i\geq 0}\dfrac{(-1)^{i}}{x-di}\dbinom{n-(b-1)i-1}{i}%
\dbinom{x+n-(d+b)i-1}{n-bi},
\end{equation*}%
and%
\begin{equation*}
s_{n}(x)=(x-n+1)\sum\limits_{i\geq 0}\dfrac{(-1)^{i}}{x-di+1}\dbinom{%
n-(b-1)i-1}{i}\dbinom{x+n-(d+b)i}{n-bi}.
\end{equation*}

The number of Dyck paths equals%
\begin{equation*}
s_{n}(n)=\sum\limits_{i\geq 0}\dfrac{(-1)^{i}}{n-di+1}\dbinom{n-(b-1)i-1}{i}%
\dbinom{2n-(d+b)i}{n-bi}.
\end{equation*}

The first pattern we considered, $rur$, is of this form where $b=d=1$. The
solution in this case is%
\begin{equation*}
s_{n}(x)=(x-n+1)\sum\limits_{i\geq 0}\dfrac{(-1)^{i}}{x-i+1}\dbinom{n-1}{i}%
\dbinom{x+n-2i}{n-i}
\end{equation*}

and the number of Dyck paths equals%
\begin{equation*}
s_{n}(n)=\sum\limits_{i\geq 0}\dfrac{(-1)^{i}}{n-i+1}\dbinom{n-1}{i}\dbinom{%
2n-2i}{n-i}=\sum\limits_{i\geq 0}(-1)^{i}C_{n-i}\dbinom{n-1}{i}.
\end{equation*}
where $C_{n}$ is the $n$th Catalan number.

\section{Patterns with any number of bifixes}

Now we suppose that the pattern $p$ has $m$ distinct bifixes $o_{1},\ldots
,o_{m}$. Let $b_{i}\times d_{i}$ be the dimensions of $p_{i}^{\prime \prime }
$, the pattern $p$ without the suffix $o_{i}$, and $a\times c$ be the
dimensions of $p$. The number of ways to reach the lattice point $(n-a-\sum
i_{j}b_{j},x-c-\sum i_{j}d_{j})$ with $k\geq 0$ combinations of the $%
p_{j}^{\prime \prime }$ with $\sum_{j=0}^{m}i_{j}=k$ is $\dbinom{k}{%
i_{1},\ldots ,i_{m}}$ with sign $(-1)^{k}$. Then the recurrence for the
ballot paths avoiding $p$ is 
\begin{eqnarray}
s_{n}(x) &=&s_{n-1}(x)+s_{n}(x-1)-\sum\limits_{k\geq 0}(-1)^{k} \\
&&\times \sum\limits_{i_{1}+\cdots +i_{m}=k}\dbinom{k}{i_{1},\ldots ,i_{m}}%
s_{n-a-\Sigma ~i_{j}b_{j}}(x-c-\sum i_{j}d_{j}).  \notag
\end{eqnarray}

In operators,%
\begin{eqnarray}
\nabla &=&B-\sum\limits_{k\geq 0}(-1)^{k}\sum\limits_{i_{1}+\cdots +i_{m}=k}%
\dbinom{k}{i_{1},\ldots ,i_{m}}B^{a+\sum i_{j}b_{j}}E^{-c-\sum i_{j}d_{j}} 
\notag \\
&=&B-B^{a}E^{-c}\sum\limits_{k\geq 0}(-1)^{k}\left(
\sum\limits_{i=0}^{m}B^{b_{i}}E^{-d_{i}}\right) ^{k}  \notag \\
&=&B-\dfrac{B^{a}E^{-c}}{1+\sum\limits_{i=0}^{m}B^{b_{i}}E^{-d_{i}}}
\label{(i)}
\end{eqnarray}

In general, we can find the basic sequence for any depth zero pattern via
the transfer formula, although it would involve many summations. We now
consider an example and some special cases.

\begin{example}
The pattern $rururrur$ has two bifixes, $rur$ and $r$. In this case, $a=5$, $%
c=3$, $b_{1}=3$, $d_{1}=2$, $b_{2}=4$ and $d_{2}=3$. Using the above
operator equation, we obtain%
\begin{eqnarray}
\nabla &=&B-\dfrac{B^{5}E^{-3}}{1+B^{3}E^{-2}+B^{4}E^{-3}}  \notag \\
&=&\dfrac{B+B^{4}E^{-3}}{1+B^{3}E^{-2}+B^{4}E^{-3}}  \notag
\end{eqnarray}

Using the transfer formula and (\ref{abel}),%
\begin{eqnarray}
b_{n}(x) &=&\sum\limits_{i=0}^{n}x\sum\limits_{j\geq 0}\sum\limits_{k\geq 0}%
\dbinom{i}{j}\dbinom{-i}{k}\dbinom{k}{n-i-3j-3k}\dfrac{1}{x-n+i+j+k}  \notag
\\
&&\times \dbinom{x-n+2i+j+k-1}{i}  \notag
\end{eqnarray}%
and%
\begin{eqnarray}
s_{n}(x) &=&(x-n+1)\sum\limits_{i=0}^{n}\sum\limits_{j\geq
0}\sum\limits_{k\geq 0}\dbinom{i}{j}\dbinom{-i}{k}\dbinom{k}{n-i-3j-3k} 
\notag \\
&&\times \dfrac{1}{x-n+i+j+k+1}\dbinom{x-n+2i+j+k}{i}.  \notag
\end{eqnarray}
\end{example}

\subsection{Patterns of the form $r^a$}

We had previously determined in \cite{N-Su07} by other methods that the
recurrence formula for this pattern is%
\begin{equation*}
s_{n}(x)=s_{n-1}(x)+s_{n}(x-1)-s_{n-a}(x-1)
\end{equation*}%
giving the operator equation%
\begin{equation*}
\nabla =B-B^{a}E^{-1}\qquad or\qquad E=\sum\limits_{i=0}^{a-1}B^{i}.
\end{equation*}

Using the new formula (\ref{(i)}), we view $r^{a}$ as a pattern with $a-1$
bifixes. Here $c=d_{i}=0$ and $b_{i}=i$ for all $0\leq i\leq a-1$, thus the
operator equation becomes%
\begin{equation}
\nabla =B-\dfrac{B^{a}}{1+\sum\limits_{i=1}^{a-1}B^{i}}=1-\dfrac{1}{%
\sum\limits_{i=0}^{a-1}B^{i}}\qquad or\qquad E=\sum\limits_{i=0}^{a-1}B^{i} 
\notag
\end{equation}%
as expected.

\subsection{Patterns of the form $r(ur)^k$}

If the pattern is of the form $r(ur)^{k}$, then $a=k+1$, $c=k$, and for each 
$0\leq i<k$ there is the bifix $r(ur)^{i}$. So, $b_{j}=d_{j}=j$ for $1\leq
j\leq k$. Let $S=BE^{-1}$. We obtain the following operator equation for
this pattern,%
\begin{eqnarray}
\nabla &=&B-\dfrac{BS^{k}}{1+S+S^{2}+\cdots +S^{k}}  \notag \\
\nabla &=&B\left( 1-\dfrac{S^{k}}{1+S+S^{2}+\cdots +S^{k}}\right) .  \notag
\end{eqnarray}

Using the Transfer Formula,%
\begin{equation*}
b_{n}(x)=\sum\limits_{i=0}^{n}x\sum\limits_{j\geq 0}(-1)^{j}\dbinom{i}{j}%
\dbinom{-j}{n-i-kj}_{k+1}\dfrac{1}{x-n+i}\dbinom{x-n+2i-1}{i}
\end{equation*}%
where $\dbinom{x}{n}_{k+1}:=[t^{n}](1+t+\cdots +t^{k})^{x}$ \cite{N-Su07}.
The number of Ballot paths is $s_{n}(x)=$

\begin{equation*}
(x-n+1)\sum\limits_{i=0}^{n}\sum\limits_{j\geq 0}(-1)^{j}\dbinom{i}{j}%
\dbinom{-j}{n-i-kj}_{k+1}\dfrac{1}{x-n+i+1}\dbinom{x-n+2i}{i}.
\end{equation*}%
We obtain a very nice formula for the Dyck paths in this case,%
\begin{equation*}
s_{n}(n)=\sum\limits_{i=0}^{n}C_{i}\sum\limits_{j\geq 0}(-1)^{j}\dbinom{i}{j}%
\dbinom{-j}{n-i-kj}_{k+1}
\end{equation*}%
where $C_{n}$ is the $n$th Catalan number.

\end{document}